\documentclass[a4paper,11pt]{article}

\title{Generation of Iterated Wreath Products Constructed from
  Full Transformation Monoids and Symmetric Groups}
\author{Jiaping Lu \\[10pt]
  Mathematical Institute, University of St Andrews,\\
  North Haugh, St Andrews, Fife, KY16 9SS\\[10pt]
  \begin{tabular}{c}\texttt{jl337@st-andrews.ac.uk}\\
    \texttt{jiapingljp@hotmail.com}
  \end{tabular}}

\usepackage{amsmath,amssymb}
\usepackage[margin=2.9cm]{geometry}
\usepackage[amsmath,thmmarks]{ntheorem}
\usepackage{enumitem}
\usepackage{leftindex}
\usepackage{tikz}
\usepackage{mathtools}
\usepackage{tikz}
\usepackage{booktabs}

\theorembodyfont{\slshape}
\newtheorem{lem}{Lemma}[section]

\newtheorem{cor}[lem]{Corollary}
\newtheorem{thm}[lem]{Theorem}
\theorembodyfont{\normalfont}

\theorembodyfont{\slshape}
\theoremnumbering{Alph}

\makeatletter
\theoremstyle{nonumberplain}
\theorembodyfont{\normalfont}
\theoremheaderfont{\normalfont\scshape}
\theoremsymbol{~\ensuremath\square}
\newtheoremstyle{proofstyle}%
  {\item[\theorem@headerfont\hskip\labelsep ##1\theorem@separator]}%
  {\item[\theorem@headerfont\hskip\labelsep ##3\theorem@separator]}
\theoremstyle{proofstyle}
\newtheorem{prf}{Proof:}
\makeatother

\renewcommand{\leq}{\leqslant}
\renewcommand{\geq}{\geqslant}
\newcommand{\nbd}{\nobreakdash-}

\newcommand{\End}[1]{\operatorname{End}(#1)}

\newcommand{\order}[1]{\mathopen{|}#1\mathclose{|}}

\newcommand{\rank}{\operatorname{rank}}
\newcommand{\set}[2]{\{\,#1\mid#2\,\}}

\renewcommand{\geq}{\geqslant}
\renewcommand{\leq}{\leqslant}

\renewcommand{\wr}{\operatorname{wr}}

\setlist[enumerate,1]{label={\normalfont(\roman*)}}

\usepackage{hyperref}
\hypersetup{colorlinks=true, linkcolor={red!50!black},
  citecolor={green!50!black}}

\begin{document}

\maketitle

\begin{abstract}
  The rank of a semigroup is the cardinality of a smallest generating set. In this paper, we introduce iterated wreath products of transformation semigroups. We determine the rank of iterated wreath products of full transformation monoids and symmetric groups.
\end{abstract}

\paragraph{Keywords: transformation semigroups, rank, relative rank, wreath product} 

\section{introduction}

Let $S$ be a semigroup and let $U$ be a subset of $S$. We say $U$ \textit{generates} $S$ if every element of $S$ can be expressed as a word using the elements of $U$. If $S$ is furthermore a monoid, then by convention, we take the identity as an empty word. The \textit{rank} of a semigroup $S$, denoted by $\rank(S)$, is defined to be the minimum cardinality of the generating sets of $S$. It is often taught in semigroup courses that  most full transformation monoids require $3$ elements to generate
 and that most full partial transformation monoids need $4$ (see \cite[Exercises 1.9.7 and 1.9.13 respectively]{Howie}). There has been extensive research on the rank of semigroup; see, for example, \cite{GomesHowie1992,GomesHowie1987, HowieMcFadden}.

Apart from the rank of a semigroup, given a semigroup $S$ and a subset $U$ of $S$, we have the \textit{relative rank} of $S$ modulo $U$, denoted by $\rank(S:U)$. It is defined as follows:
\[
\rank(S:U):=\set{\min\order{X}}{\langle U\cup X\rangle=S, X\subseteq S}.
\]

Our research is inspired by results on transformation monoids preserving a uniform partition. Pei~\cite{Pei} establishes that transformations preserving an identical uniform partition in a full transformation monoid form a submonoid of rank at most $6$. Ara\'ujo and Schneider~\cite{wrSemi} improve Pei's results. Using notions of wreath products, they prove that such submonoids can be expressed as wreath products of full transformation monoids~\cite[Lemma 2.1]{wrSemi}, and they show that such wreath products have rank $4$. Furthermore, in~\cite{Araujo2015}, Ara\'ujo et al.\ investigated the rank of transformation semigroups which preserves an arbitrary partition.

The results in~\cite{Araujo2015, wrSemi} rely upon the relative rank. 
In particular, Ara\'ujo and Schneider's results in~\cite{wrSemi} depend on the relative rank modulo a wreath product of symmetric groups and the rank of this group, the latter of which is determined using representation theory.
Mitchell and East~\cite{EastMitchell} later used an elementary method to establish the rank of this wreath product of permutation groups. 
The author of this paper extended their result and determined the minimum number of generator of iterated wreath products of symmetric groups ~\cite[Theorem A \& Corollary B]{LuQuick}. 
Using this theorem and the relative rank, we will establish iterated wreath product of transformation semigroups and determine the rank of iterated wreath products of full transformation monoids and symmetric groups, extending Ara\'ujo and Schneider's result in~\cite{wrSemi}.

We state our theorem of this paper below.

\begin{thm}\label{thm:main}
Let $n \geq 2$ and $(T_1, X_1)$,~$(T_{2}, X_2)$, \dots,~$(T_{n}, X_n)$ be transformation monoids each of which is either a full transformation monoid or a symmetric group on a finite non-singleton set.  
Set $\Omega_1=X_1$ and $V_1=T_1$. 
For $2\leq i\leq n$, let $\Omega_{i}=X_i\times \Omega_{i-1}$ and let $V_i =  T_{i} \wr V_{i-1}$ be the iterated wreath product of $T_i$ by the transformation monoid $(V_{i-1}, \Omega_{i-1})$. Set $V=V_n$. Then
  \[
  \rank(V)=n+t,
  \]
  where $t$ is the number of full transformation monoids among $T_1,\dots, T_n$.
\end{thm}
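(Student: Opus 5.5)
The plan is to split $V$ into its group of units $G$ and the non-invertible part $V\setminus G$. This gives
\[
\rank(V)=\rank(G)+\rank(V:G)
\]
because $V\setminus G$ is an ideal of $V$: every generating set of $V$ must contain a generating set of $G$, and the remaining generators must together with $G$ generate $V$.

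First I would identify $G$. The units of $T_i\wr V_{i-1}$ are the pairs $(f,v)$ with $v$ a unit of $V_{i-1}$ and every coordinate of $f\colon \Omega_{i-1}\to T_i$ a unit of $T_i$. By induction on $i$, $G=\mathrm{Sym}(X_n)\wr\cdots\wr\mathrm{Sym}(X_1)$ is an iterated wreath product of symmetric groups on non-singleton sets. By \cite[Theorem A \& Corollary B]{LuQuick}, $\rank(G)=n$; I will take this as given. It remains to show $\rank(V:G)=t$.

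For the lower bound, let $i$ be an index with $T_i$ a full transformation monoid, and consider the elements $(f_n,\dots)$ written in coordinates level by level. Define $I_i\subseteq V$ as the set of elements whose level-$i$ component contains at least one non-bijective coordinate. The key step is to show that $I_i$ is an ideal and that $V\setminus I_i$ is a submonoid. Composition at level $i$ multiplies coordinate functions after permuting or reindexing by the lower levels. A product whose level-$i$ coordinates are all bijections on the finite set $X_i$ must come from factors whose level-$i$ coordinates that are actually used are bijections; the coordinates that are not used come from points outside the image of the lower-level map. I expect this to be the main obstacle, and I would first handle it by counting sizes.

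If this works, each $I_i$ needs a generator lying in $I_i$ but in no $I_j$ for $j\neq i$. Such an element is forced modulo the ideal structure: the sets $V\setminus\bigcup_{j\ne i}I_j$ are submonoids, so a generating set must meet each $I_i\setminus\bigcup_{j\neq i}I_j$. This gives $\rank(V:G)\ge t$. If the submonoid property fails, I would instead map $V$ homomorphically onto a quotient that records, for each relevant level, only whether the element is ``singular'', and show this quotient is $\{0,1\}^t$ or a similar structure of rank $t$ modulo the identity.

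For the upper bound, for each $i$ with $T_i$ full I take one element $e_i$. Its level-$i$ component has one coordinate equal to an idempotent of rank $|X_i|-1$, and every other coordinate, at every level, is the identity. I then show that $\langle G, e_i\rangle$ contains every element whose only singular coordinates are at level $i$. Conjugating $e_i$ by base-group elements of $G$ moves the singular coordinate to any point of $\Omega_{i-1}$, and by \cite{Howie} $\mathrm{Sym}(X_i)$ together with one rank-$(|X_i|-1)$ idempotent generates the full transformation monoid $T_i$. Multiplying these conjugates gives arbitrary level-$i$ functions with values in $T_i$. Singular components at the lower levels $V_{i-1}$ are obtained by the analogous argument. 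Every element of $V$ then factors as a product of pieces, one level at a time, by the standard wreath product decomposition $(f,v)=(f,1)(1,v)$ applied recursively. Hence $\rank(V:G)\le t$, and so $\rank(V)=n+t$.
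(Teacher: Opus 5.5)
\textbf{Verdict: the lower bound has a genuine gap.} The rest of the proposal matches the paper: the reduction $\rank(V)=\rank(W)+\rank(V:W)$ (your $G$ is the paper's $W$), the identification of the units, the appeal to the rank-$n$ result for iterated wreath products of symmetric groups, and the upper bound, which is the paper's Lemmas~\ref{lem:wrrelarank} and~\ref{lem:semiup}. One slip there: the level-$i$ coordinates are moved by the lower-level units $W_{i-1}$ acting on $\Omega_{i-1}$, not by the base group $G_i^{\Omega_{i-1}}$.

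\textbf{Where it fails.} The step you call key is false. $I_i$ is not an ideal as soon as some $T_j$ with $j<i$ is a full transformation monoid. It is only a right ideal: left multiplication by an element whose lower levels are not surjective can discard the singular coordinate. For example, take $n=2$ with $T_1,T_2$ both full. Let $v=(f,e)$ with $f_1$ singular and $f_x=e$ for $x\neq 1$. Let $u=(\bar e,c_2)$, where $c_2$ is the constant map onto $2\in X_1$. Then $uv=((f_{xc_2})_x,c_2)=(\bar e,c_2)\notin I_2$, although $v\in I_2$. Your worry was misplaced: $V\setminus I_i$ is a submonoid; it is the ideal property that fails.

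Consequently $\bigcup_{j\neq i}I_j$ need not be an ideal. The inference ``$V\setminus\bigcup_{j\neq i}I_j$ is a submonoid, so a generating set must meet $I_i\setminus\bigcup_{j\neq i}I_j$'' then does not follow. Closure of the complement under products says nothing about which factors may occur in a factorisation of $\bar\alpha_i$. Your fallback fails for the same reason: $u\mapsto(\chi_{I_i}(u))_i\in\{0,1\}^t$ is not a homomorphism. The same $u,v$ give $(1,0)$ for $uv$, whereas $(1,0)\vee(0,1)=(1,1)$. So the half of the theorem that needs an idea is not yet proved.

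\textbf{How to repair it.} The conclusion you want is true and follows from what does hold. First, $I_j$ is a right ideal. Second, $gI_j\subseteq I_j$ for every $g\in W$, because $g\theta_{j-1}$ permutes $\Omega_{j-1}$, so every level-$j$ coordinate of the right factor survives. Now suppose $V=\langle W\cup L\rangle$. Write $\bar\alpha_i=g\mu v$ with $g\in W$ and $\mu\in L$ the first non-unit factor. If $\mu\in I_j$ for some $j\neq i$, then $\bar\alpha_i=(g\mu)v\in I_j$, which is false. Hence $\mu\in I_i\setminus\bigcup_{j\neq i}I_j$. These sets are pairwise disjoint, so $|L|\geq t$.

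This is the same first-non-unit-factor device as the paper's Lemma~\ref{lem:semilow}. The paper instead finishes with pigeonhole and the kernel-intersection Lemma~\ref{lem:kercap}; the coordinatewise invariant is arguably simpler.

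Alternatively, let $J_j$ be the set of elements with a non-bijective coordinate at some level $\leq j$. These are genuine ideals (preimages of $V_j\setminus W_j$), they form a chain, and their complements are submonoids. Hence any generating set must meet each nonempty $J_j\setminus J_{j-1}$, which again gives $t$.
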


The structure of this paper is as follows. In Section~\ref{sec:semi}, we will introduce wreath products of transformation semigroups. We show that wreath products of transformation semigroups are subsemigroups of suitable full transformation monoids, and hence iterated wreath products of transformation semigroups can be defined. In Section~\ref{sec:chap5prf}, we will determine the relative rank of an iterated wreath product of full transformation monoids and symmetric groups modulo an iterated wreath product of symmetric groups and hence prove the main theorem.

\paragraph{Acknowledgements:} The author is funded by the China
Scholarship Council.  

\section{Semigroups and their wreath products}\label{sec:semi}

The definition of wreath products of semigroups can be found in \cite[Chapter 10]{Semiwreath}. To make this paper self-contained, we introduce wreath products below. 

Let $(S, Y), (R, X)$ be transformation semigroups. Denote by $\End{S^X}$ the monoid of endomorphisms on $S^X$. We shall write the action of $\End{S^X}$ on $S^X$ as a left action and the actions of $(S, Y)$ and $(R,X)$ as right actions. Let $\theta:R\to \End{S^X}$ be the homomorphism defined by, for $r\in R$ and $(s_i)_{i\in X}\in S^X$,
\[
(r\theta)(s_i)_{i\in X}=(s_{ir})_{i\in X}.
\]
Suppose that $X=\{1,2,\dots, n\}$, and in this case, the map $r\theta$ on $S^X$ is
\[
(r\theta)(s_1, s_2, \dots, s_n)=(s_{1r}, s_{2r},\dots, s_{nr}).
\]
 Note that the right action of $R$ on the set $X$ yields a left action of $R$ on the semigroup $S^X$. To simplify the notation, we shall use $\leftindex^r(s_i)_{i\in X}$ instead of $(r\theta)(s_i)_{i\in X}$. Let $S\wr_X R$ be the set of elements $(b, r)$ with $b\in S^X$ and $r\in R$. Let $a=(a_i)_{i\in X}, b=(b_i)_{i\in X}\in S^X$ and $r, t\in R$.  For $(a, r), (b,t)\in S\wr _X R$, the multiplication of the semigroup wreath product $S\wr_X R$ is as follows:
\begin{equation}\label{eq:wrsemimul}
(a, r)(b, t)=(a\leftindex^{r}b, rt)=((a_ib_{ir})_{i\in X}, rt).
\end{equation}
Let $(y,x)\in Y\times X$ and $((s_i)_{i\in X}, r)\in S\wr_X R$. The action of $S\wr_X R$ on $Y\times X$ is defined by
\begin{equation}\label{eq:semigroupaction}
(y,x)((s_i)_{i\in X}, r)=(ys_x, xr).
\end{equation}
In this paper, for the wreath product of semigroups $S$ by $R$, if we has specified the set that $R$ acts on, then we may write $S\wr R$ instead, omitting the set.

Assume that $R$ and $S$ are monoids, and we denote by $e$ the identity of a monoid throughout this paper. 
Recall that in a monoid $S$, an element $a$ is a \textit{unit} if there exists $b\in S$ such that $ab=ba=e$.
Suppose that $H\leq S$ is the group of units in $S$ and $G\leq R$ is the group of units in $R$. Then $H\wr G$ can be taken as a wreath product of monoids or a wreath product of groups. Let $(a, f), (b, g)\in H\wr G$ with $a=(a_i)_{i\in X},b=(b_i)_{i\in X}\in H^X$ and $f, g\in G$. When we consider $H\wr G$ as a wreath product of groups, the multiplication is 
\[
(a, f)(b, g)=(ab^{f^{-1}}, fg)=((a_ib_{if})_{i\in X}, fg).
\]
This multiplication coincides with the multiplication when $H\wr G$ is considered as a wreath product of monoids as defined in Equation~\eqref{eq:wrsemimul}. We then give the following result without proof.
\begin{lem}\label{lem:unitsemiwr}
Let $(S, Y), (R, X)$ be transformation monoids. Suppose that $H\leq S$ is the group of units in $S$ and $G\leq R$ is the group of units in $R$. Then $S\wr R$ is a monoid and $H\wr G$ is the group of units in $S\wr R$.
\end{lem}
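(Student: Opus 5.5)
We prove the three claims in turn: $S\wr R$ is a monoid, $H\wr G$ consists of units, and every unit lies in $H\wr G$.

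\emph{Monoid structure.} Associativity of the multiplication in Equation~\eqref{eq:wrsemimul} is the standard semidirect product computation, using that $\theta$ is a homomorphism into $\End{S^X}$. Concretely,
\[
((a,r)(b,t))(c,u) = ((a_ib_{ir}c_{irt})_{i\in X}, rtu) = (a,r)((b,t)(c,u)).
\]
For the identity, write $\mathbf e=(e)_{i\in X}$. Since $e\in R$ acts trivially on $X$, we have $\leftindex^{e}b=b$. Also every $r\theta$ is an endomorphism, and it sends $\mathbf e$ to $\mathbf e$ because it merely permutes or repeats coordinates. Hence
\[
(\mathbf e,e)(b,t)=(b,t)\qquad\text{and}\qquad (a,r)(\mathbf e,e)=(a,r),
\]
so $S\wr R$ is a monoid with identity $(\mathbf e,e)$.

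\emph{Elements of $H\wr G$ are units.} Given $(a,f)$ with $a_i\in H$ and $f\in G$, we take the candidate inverse
\[
(b,f^{-1}),\qquad b_i=a_{if^{-1}}^{-1}.
\]
Then
\[
(a,f)(b,f^{-1})=((a_i b_{if})_{i\in X}, e)=((a_ia_i^{-1})_{i\in X},e)=(\mathbf e,e).
\]
In the other order,
\[
(b,f^{-1})(a,f)=((b_ia_{if^{-1}})_{i\in X},e)=(\mathbf e,e).
\]
Together with the earlier remark that the group and monoid multiplications agree on $H\wr G$, this shows that $H\wr G$ is a subgroup of the group of units.

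\emph{Every unit lies in $H\wr G$.} This is the main step. Suppose $(a,r)(b,t)=(b,t)(a,r)=(\mathbf e,e)$.
\begin{enumerate}
\item Comparing second coordinates gives $rt=tr=e$, so $r\in G$ and $t=r^{-1}$.
\item Comparing first coordinates gives $a_ib_{ir}=e$ and $b_ia_{it}=e$ for all $i\in X$.
\item Since $t=r^{-1}$, substituting $i\mapsto ir$ in the second family gives $b_{ir}a_i=e$.
\item Thus each $a_i$ has the two-sided inverse $b_{ir}$, so $a_i\in H$.
\end{enumerate}
Hence $(a,r)\in H\wr G$. The only point needing care is the reindexing in step (iii), which is valid because $r$ is now known to be a bijection of $X$. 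No finiteness assumption on $S$ or $R$ is needed.
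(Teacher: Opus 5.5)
Your proof is correct. The paper states this lemma without proof, and your direct check is exactly the routine verification it leaves to the reader: the identity $(\mathbf e,e)$, the explicit inverse $(b,f^{-1})$ with $b_i=a_{if^{-1}}^{-1}$, and conversely reading $rt=tr=e$ and $a_ib_{ir}=b_{ir}a_i=e$ off the two unit equations. The one assumption you use beyond the statement, that the identity of $R$ acts as the identity map on $X$, is part of the standard meaning of a transformation monoid and is genuinely needed, since otherwise $(\mathbf e,e)(b,t)=((b_{ie})_{i\in X},t)$ need not equal $(b,t)$.
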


One can iterate the construction of wreath products above. 
Let $(R,X), (S, Y), (T, Z)$ be transformation semigroups. 
Then $(S\wr R, Y\times X)$ is a transformation semigroup, and $T\wr (S\wr R)$ is the wreath product of $T$ by $(S\wr R, Y\times X)$. 
The iterated wreath product $T\wr (S\wr R)$ is a transformation semigroup on $Z\times Y\times X$. Consequently, if $T_1,\dots, T_n$ are transformation semigroups on $X_1,\dots, X_n$ respectively, then we may iterate the wreath product construction using the induced action. Thus we define $V_1=T_1$ to be the transformation semigroup on $\Omega_1=X_1$. As an inductive hypothesis, for $i\geq2$, we assume that $V_{i-1}$ is a transformation semigroup on $\Omega_{i-1}=X_{i-1}\times\dots\times X_1$. Then let $V_i=T_i\wr V_{i-1}$ be the wreath product of $T_i$ by $(V_{i-1}, \Omega_{i-1})$.

\section{Proof of Theorem~\ref{thm:main}}\label{sec:chap5prf}

We will establish Theorem~\ref{thm:main} in this section. 
We rely on the following key lemmas to establish the theorem. 

\begin{lem}[Ara\'ujo and Schneider~{\cite[Lemma 3.1]{wrSemi}}]\label{lem:ranksemi}
Let $S$ be a finite semigroup and let $G$ be the group of
units in $S$. If $U\subseteq S$ such that $\langle U\rangle= S$, then $\langle U\cap G\rangle = G$. In particular, 
\[
\rank (S) =\rank(S : G)+\rank(G).
\]
\end{lem}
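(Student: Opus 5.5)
The plan is to prove the lemma through one structural fact about finite monoids: a product of elements lies in the group of units only if every factor does. Both assertions of the lemma then follow by counting generators. Throughout, $S$ is taken to be a monoid with identity $e$, since otherwise there is no group of units $G$ to speak of. As in the introduction, the identity counts as the empty word.

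\emph{Step 1: $S\setminus G$ is an ideal.} Suppose $a,b\in S$ with $ab=g\in G$. Then $a(bg^{-1})=e$, so $a$ has a right inverse $c=bg^{-1}$. The map $x\mapsto cx$ on $S$ is injective, since $a(cx)=x$. Because $S$ is finite, this map is a bijection, so there is some $d$ with $cd=e$. Then $a=a(cd)=(ac)d=d$, which gives $ca=e$, so $a\in G$. Symmetrically, $(g^{-1}a)b=e$ shows that $b$ has a left inverse. The map $x\mapsto xb$ is then injective, and the same finiteness argument gives $b\in G$. By induction on the length of a product, if $u_1u_2\cdots u_k\in G$ then every $u_j\in G$.

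\emph{Step 2: $\langle U\cap G\rangle=G$.} Let $g\in G$. Since $\langle U\rangle=S$, we can write $g=u_1\cdots u_k$ with each $u_j\in U$. If $g=e$ we may use the empty word. By Step 1 every $u_j$ lies in $U\cap G$, so $g\in\langle U\cap G\rangle$. The reverse inclusion holds because $G$ is closed under multiplication.

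\emph{Step 3: the rank formula.} For the inequality $\rank(S)\geq\rank(S:G)+\rank(G)$, take a generating set $U$ of $S$ with $|U|=\rank(S)$ and split $U=(U\cap G)\,\sqcup\,(U\setminus G)$.
\begin{itemize}
\item By Step 2, $U\cap G$ generates $G$, so $|U\cap G|\geq\rank(G)$.
\item Since $\langle G\cup (U\setminus G)\rangle\supseteq\langle U\rangle=S$, we get $|U\setminus G|\geq\rank(S:G)$.
\end{itemize}
Adding these gives the inequality. For the reverse inequality, choose $A$ with $\langle A\rangle=G$ and $|A|=\rank(G)$, and choose $X$ with $\langle G\cup X\rangle=S$ and $|X|=\rank(S:G)$. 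Then $\langle A\cup X\rangle$ contains $G$ and $X$, hence equals $S$. So $\rank(S)\leq|A|+|X|$.

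The only step needing genuine care is Step 1, specifically the use of finiteness to upgrade a one-sided inverse to a two-sided one. Everything else is bookkeeping.
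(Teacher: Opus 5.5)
Your proof is correct and matches the standard argument behind this lemma. The paper gives no proof of its own and simply cites Ara\'ujo and Schneider; your route is that in a finite monoid a product is a unit only if every factor is, and the rank formula then follows by splitting a minimal generating set into units and non-units. One small slip in Step~1: the left inverse $g^{-1}a$ of $b$ only makes $x\mapsto bx$ injective, not $x\mapsto xb$ (in the bicyclic monoid with $pq=e$, right multiplication by $q$ is not injective), but you can bypass this entirely because $a\in G$ already gives $b=a^{-1}g\in G$.
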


\begin{lem}\label{lem:wrrelarank}
Let $(S, Y), (R, X)$ be transformation monoids. Let $G$ be the group of units of $R$ and let $H$ be the group of units of $S$. If $G$ is transitive on $X$, then
\[
\rank(S\wr R: H\wr G)\leq \rank(R:G)+\rank(S:H).
\]
\end{lem}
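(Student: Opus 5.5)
We have to show that adjoining to $H\wr G$ a set of $\rank(R:G)+\rank(S:H)$ well-chosen elements generates $S\wr R$. Fix $A\subseteq R$ with $\langle G\cup A\rangle=R$ and $\order{A}=\rank(R:G)$, and $B\subseteq S$ with $\langle H\cup B\rangle=S$ and $\order{B}=\rank(S:H)$. Fix a point $x_0\in X$. For $s\in S$, let $\delta_s\in S^X$ be the tuple with $s$ in coordinate $x_0$ and $e$ elsewhere. Our candidate set is
\[
W=\set{(\bar e,a)}{a\in A}\cup\set{(\delta_b,e)}{b\in B},
\]
where $\bar e$ is the constant tuple $e$. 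Clearly $\order{W}\leq\rank(R:G)+\rank(S:H)$, so it suffices to show $\langle (H\wr G)\cup W\rangle=S\wr R$.

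We argue in three steps, using the multiplication \eqref{eq:wrsemimul} throughout.

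First, we obtain all elements $(\bar e,r)$ with $r\in R$. Since $(\bar e,r)(\bar e,t)=(\bar e,rt)$, the map $r\mapsto(\bar e,r)$ is a homomorphism. Moreover $(\bar e,g)\in H\wr G$ for every $g\in G$, so every $(\bar e,r)$ lies in the generated semigroup because $R=\langle G\cup A\rangle$.

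Second, we obtain all elements $(\delta^{x}_s,e)$, where $\delta^x_s$ has $s$ at an arbitrary position $x$ and $e$ elsewhere. The elements $(\delta_s,e)$ with $s$ at position $x_0$ multiply coordinatewise, so $s\mapsto(\delta_s,e)$ is a homomorphism. Since $(\delta_h,e)\in H\wr G$ for $h\in H$ and $S=\langle H\cup B\rangle$, we get every $(\delta_s,e)$. To move the nontrivial entry from $x_0$ to $x$, we use transitivity of $G$ and choose $g\in G$ with $xg=x_0$. Then
\[
(\bar e,g)(\delta_s,e)(\bar e,g^{-1})=(\leftindex^{g}\delta_s,e).
\]
The $i$-th coordinate of $\leftindex^{g}\delta_s$ is the $ig$-th coordinate of $\delta_s$, so it equals $s$ exactly when $ig=x_0$, that is, when $i=x$. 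This conjugation is the step that genuinely needs transitivity. I expect it to be the main (though mild) obstacle, because the left-action convention $\leftindex^r$ is easy to get backwards.

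Third, we assemble an arbitrary element $(b,r)$ with $b=(b_i)_{i\in X}$. The elements $(\delta^x_{b_x},e)$ for different $x$ have nontrivial entries in different coordinates, so their product over $x\in X$ is $(b,e)$. Then $(b,e)(\bar e,r)=(b\,\leftindex^{e}\bar e,\,r)=(b,r)$, which completes the argument.

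Finally, to be safe about the ambient setup, note that $\leftindex^{r}$ is defined for all $r\in R$ and that $G$, $H$ are groups of units as in Lemma~\ref{lem:unitsemiwr}. The inclusion $H\wr G\subseteq S\wr R$ is immediate, and finiteness is not needed for this upper bound.
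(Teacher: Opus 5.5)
Your proof is correct and is essentially the paper's. You adjoin the same elements, $(\bar e,a)$ for $a\in A$ and $(\delta_b,e)$ for $b\in B$, and use transitivity of $G$ to move the non-identity coordinate. The differences are that you write out the conjugation and the final assembly explicitly, and you prove the relative generation directly rather than first reducing via Lemma~\ref{lem:ranksemi} as the paper does (that reduction is not actually needed).

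The one thing to fix is your closing remark. Finiteness of $Y$ is indeed irrelevant, but your third step takes a product over all $x\in X$, so you do need $X$ finite. Without it the bound genuinely fails. Take $R=G=\mathrm{Sym}(\mathbb{N})$ and $S$ the full transformation monoid on two points, so that $\rank(R:G)+\rank(S:H)=1$. For a single extra non-unit generator $\lambda$, the set of non-unit coordinates of any non-unit in $\langle H\wr G,\lambda\rangle$ is a nonempty finite union of permutation-images of the corresponding set for $\lambda$. These sets are therefore either all finite or all infinite, whereas $S\wr R$ contains non-units of both kinds.
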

\begin{prf}
Lemma~\ref{lem:unitsemiwr} tells us that $H\wr G$ is the group of units of $S\wr R$. Using Lemma~\ref{lem:ranksemi}, in order to prove the lemma, it suffices to verify that 
\[
\rank(S\wr R)\leq \rank(H\wr G)+\rank(R:G)+\rank(S:H).
\]
Set $b=\rank(R:G)$ and $c=\rank(S:H)$. Then there exist $r_1,\dots, r_b\in R$ and $s_1, \dots, s_c\in S$ such that $R=\langle G, r_1,\dots, r_b\rangle$ and $S=\langle H, s_1,\dots,s_c \rangle$. We will show that $H\wr G$ along with $(b+c)$ elements generates $S\wr R$. 

Let $\bar e$ be the identity in $S^X$. Then for $1\leq i\leq b$, let $t_i=(\bar e, r_i)\in S\wr R$. It follows that for any $r\in R$, $(\bar e, r)$ is contained in $T=\langle H\wr G, t_1,\dots, t_b \rangle$. On the other hand, let $x$ be a fixed element of $X$. For each $1\leq i\leq c$, set
\[
\bar s_i=(e,\dots,e,\underset{\mathclap{\substack{\uparrow \\ \text{$x$\nbd coordinate}}}}{s_i},e,\dots,e)\in S^X
\]
and set $u_i=(\bar s_i, e)\in S\wr R$. Then for any $s\in S$, the element
\[
((e,\dots,e,\underset{\mathclap{\substack{\uparrow \\ \text{$x$\nbd coordinate}}}}{s},e,\dots,e), e)
\]
is contained in $U=\langle T, u_1,\dots, u_c\rangle$. As $G$ is a transitive on $X$ and $G$ is contained in $U$, the direct product $S^X$ is contained in $U$ as well. We then deduce that $S\wr R=\langle H\wr G, t_1,\dots, t_b, u_1,\dots, u_c\rangle$, and so the lemma holds.
\end{prf}

Let $n\geq2$ be a positive integer. For each integer $1\leq i\leq n$, let $X_i=\{1, 2, \dots, m_i\}$ where $m_i\geq2$. Let $(T_1, X_1),\dots, (T_n, X_n)$ be transitive transformation monoids on $X_1, \dots, X_n$ respectively such that for $1\leq i\leq n$, the group $G_i$ of units of $T_i$ is transitive on $X_i$ and $\rank(T_i:G_i)\leq1$. Then let $V_1=T_1$ and $\Omega_1=X_1$. For $2\leq i\leq n$, recurssively, let $\Omega_i=X_i\times \Omega_{i-1}$ and 
\[
V_i=T_i\wr V_{i-1}=T_i\wr (T_{i-1}\wr(\dots\wr(T_2\wr T_1)\cdots))
\]
be the  wreath product of the monoid $T_i$ by $(V_{i-1}, \Omega_{i-1})$. According to Lemma~\ref{lem:unitsemiwr}, $V_i$ is a transformation monoid on $\Omega_i$ for $1\leq i\leq n$. We set $V=V_n$ and $\Omega=\Omega_n$. Now let $W_1= G_1$ and let
\[
W_i=G_i\wr W_{i-1}=G_i\wr (G_{i-1}\wr(\dots\wr(G_2\wr G_1)\cdots))
\]
be the iterated wreath product of permutation groups. Set $W=W_n$. 
By repeated use of Lemma~\ref{lem:unitsemiwr}, we deduce the following result.

\begin{cor}\label{cor:unititwr}
Let $V,W$ be defined above. Then $W$ is the group of units in $V$.
\end{cor}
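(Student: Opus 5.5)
We argue by induction on $n$, showing that for every $1\leq i\leq n$ the group of units of $V_i$ is exactly $W_i$; the case $i=n$ is the corollary. For $i=1$ we have $V_1=T_1$ and $W_1=G_1$, and $G_1$ is by definition the group of units of $T_1$.

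For the inductive step, assume $W_{i-1}$ is the group of units of $V_{i-1}$. The pair $(V_{i-1},\Omega_{i-1})$ is a transformation monoid: it is a monoid by Lemma~\ref{lem:unitsemiwr} applied at the previous stage (or trivially for $i-1=1$), and it acts on $\Omega_{i-1}$ via \eqref{eq:semigroupaction}. The pair $(T_i,X_i)$ is a transformation monoid with group of units $G_i$. Applying Lemma~\ref{lem:unitsemiwr} with $S=T_i$, $Y=X_i$, $R=V_{i-1}$, $X=\Omega_{i-1}$, $H=G_i$ and $G=W_{i-1}$ shows that $V_i=T_i\wr V_{i-1}$ is a monoid whose group of units is $G_i\wr W_{i-1}=W_i$.

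One point needs a word of justification. In Lemma~\ref{lem:unitsemiwr} the product $H\wr G$ means the wreath product taken with respect to the action of $G$ on $X$ obtained by restricting the action of $R$. In the definition of $W_i$, the group $W_{i-1}$ acts on $\Omega_{i-1}$ by the iterated permutation-group wreath action. So we must check that these two actions agree and that the group multiplications agree. The multiplications agree by the remark preceding Lemma~\ref{lem:unitsemiwr}. The actions agree because both are given by the same formula $(y,x)((s_j)_j,r)=(ys_x,xr)$. Hence $W_i$, viewed as a subgroup of $V_i$, is the same permutation group on $\Omega_i$ as the iterated wreath product of the $G_j$. This identification is the only subtle point; the rest is a direct application of Lemma~\ref{lem:unitsemiwr} at each of the $n-1$ stages.
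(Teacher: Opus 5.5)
Your proof is correct and is essentially the paper's argument: the paper obtains the corollary ``by repeated use of Lemma~\ref{lem:unitsemiwr}'', which is exactly your induction on $i$ with $S=T_i$, $R=V_{i-1}$, $H=G_i$, $G=W_{i-1}$. Your remark that the group and monoid wreath multiplications and actions coincide spells out the identification of $W_i$ that the paper leaves implicit.
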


We will use Lemma~\ref{lem:ranksemi} and investigate $\rank(V:W)$ to determine $\rank(V)$. We first determine an upper bound for $\rank(V:W)$.
\begin{lem}\label{lem:semiup}
Let $V,W$ be defined as before. Then
\[
\rank(V:W)\leq \sum_{i=1}^n \rank(T_i:G_i).
\]
\end{lem}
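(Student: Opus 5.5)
We argue by induction on $n$, with Lemma~\ref{lem:wrrelarank} as the only tool. To apply that lemma at each stage we also need that the group of units of $V_{i-1}$ is transitive on $\Omega_{i-1}$, so we prove transitivity alongside the rank bound. Precisely, we show for every $1\leq k\leq n$:
\[
\rank(V_k:W_k)\leq \sum_{i=1}^k \rank(T_i:G_i),
\qquad\text{and}\qquad W_k \text{ is transitive on } \Omega_k .
\]
Here $W_k$ is the group of units of $V_k$ by Corollary~\ref{cor:unititwr} applied at level $k$, which is legitimate since that corollary is obtained by repeated use of Lemma~\ref{lem:unitsemiwr}.

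For the base case $k=1$ we have $V_1=T_1$ and $W_1=G_1$. The rank bound is an equality, and $G_1$ is transitive on $X_1$ by hypothesis.

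For the inductive step, suppose the claim holds for $k-1\geq 1$. We apply Lemma~\ref{lem:wrrelarank} with
\[
(S,Y)=(T_k,X_k),\quad H=G_k,\quad (R,X)=(V_{k-1},\Omega_{k-1}),\quad G=W_{k-1}.
\]
Then $S\wr R=V_k$ and $H\wr G=W_k$. The hypothesis of the lemma, that $W_{k-1}$ is transitive on $\Omega_{k-1}$, holds by induction. The lemma therefore gives
\[
\rank(V_k:W_k)\leq \rank(V_{k-1}:W_{k-1})+\rank(T_k:G_k)\leq\sum_{i=1}^k\rank(T_i:G_i).
\]

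It remains to show that $W_k=G_k\wr W_{k-1}$ is transitive on $\Omega_k=X_k\times\Omega_{k-1}$. Take $(y,x)$ and $(y',x')$ in $\Omega_k$.
\begin{enumerate}
\item By transitivity of $W_{k-1}$, choose $g\in W_{k-1}$ with $xg=x'$.
\item By transitivity of $G_k$, choose $h\in G_k$ with $yh=y'$.
\item Let $a\in G_k^{\Omega_{k-1}}$ have $h$ in the $x$-coordinate and $e$ elsewhere.
\end{enumerate}
By the action \eqref{eq:semigroupaction}, $(y,x)(a,g)=(yh,xg)=(y',x')$. So $W_k$ is transitive, which completes the induction. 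Taking $k=n$ gives the lemma.

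The only step needing care is keeping the transitivity hypothesis of Lemma~\ref{lem:wrrelarank} available at every level. This is why transitivity is carried inside the induction rather than proved separately. Everything else is a direct substitution into Lemma~\ref{lem:wrrelarank}.
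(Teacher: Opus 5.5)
Your proof is correct and follows the paper's argument: induction on $n$, applying Lemma~\ref{lem:wrrelarank} to $V_k=T_k\wr V_{k-1}$, with $W_{k-1}$ as the group of units of $V_{k-1}$. The one difference is that you carry the transitivity of $W_{k-1}$ on $\Omega_{k-1}$ explicitly through the induction; this is a hypothesis of Lemma~\ref{lem:wrrelarank} that the paper uses without checking, and your verification of it via the action \eqref{eq:semigroupaction} is correct.
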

\begin{prf}
The lemma can be proved by induction on $n$. Lemma~\ref{lem:wrrelarank}  establishes the case when $n=2$.
Now let $n>2$ and suppose that the lemma holds for iterated wreath products of transformation monoids involving fewer than $n$ factors. 
Note that we can treat the iterated wreath products $V$ and $W$ as $V=T_n\wr V_{n-1}$ and $W=G_n\wr W_{n-1}$. 
Combining the inductive hypothesis, corollary~\ref{cor:unititwr} and Lemma~\ref{lem:wrrelarank} yields that
\[
\rank(V:W)\leq \rank(T_n:G_n)+\rank(V_{n-1}:W_{n-1})\leq \sum_{i=1}^n \rank(T_i:G_i)
\]
as required.
\end{prf}

In the following, we will determine a 
lower bound for $\rank(V:W)$. 
To this end, we will first present our notation for elements of $V$. 
For $1\leq i\leq n-1$, as $V_{i+1}=T_{i+1}\wr V_{i}$,
an element $v$ in $V_{i+1}$ has the form:
\[
v=((t_j)_{j\in \Omega_{i}}, v_{i})=(\gamma_{i+1}, v_{i})
\]
where $\gamma_{i+1}=(t_j)_{j\in \Omega_{i}}\in T_{i+1}^{\Omega_{i}}$ and $v_{i}\in V_{i}$. Then recursively, we can represent $v$ as:
\[
v=(\gamma_{i+1}, v_{i})=(\gamma_{i+1}, \gamma_{i}, v_{i-1})=\dots=(\gamma_{i+1},\gamma_{i}, \dots,\dots,\gamma_1)
\]
where $\gamma_j\in T_j^{\Omega_{j-1}}$ for $1\leq j\leq i+1$ with $T_1^{\Omega_{0}}=T_1$ by convention. Thus, in the following, we will also write $(\gamma_{i+1},\gamma_{i}, \dots,\dots,\gamma_1)$ for elements in $V_{i+1}$ with $\gamma_{i+1}, \dots, \gamma_1$ as defined above. In particular, for $v_n\in V_n$, we can represent it as $v_n=(\gamma_n,\gamma_{n-1},\dots, \gamma_1)$.

For $1\leq i\leq n-1$, let $\theta_i: V\to V_i$ be the natural projection homomorphism, that is 
\[
(\gamma_n,\gamma_{n-1},\dots, \gamma_1)\theta_i=(\gamma_{i},\gamma_{i-1}, \dots,\dots,\gamma_1)
\]
for any $(\gamma_n,\gamma_{n-1},\dots, \gamma_1)\in V$. Let $\eta_i:\Omega\to\Omega_i$ be the natural projection map. We will then show that $(\theta_i, \eta_i)$ satisfies $(xv)\eta_i=(x\eta_i)(v\theta_i)$ for all $x\in\Omega$ and $v\in V$.

\begin{lem}\label{lem:semihomo}
Let $1\leq i\leq n-1$. For $\eta_i$ and $\theta_i$ as defined above,
\[
(xv)\eta_i=(x\eta_i)(v\theta_i)
\]
for all $x\in\Omega$ and $v\in V$.
\end{lem}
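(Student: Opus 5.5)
Since $\Omega=X_n\times\Omega_{n-1}$ and $V=T_n\wr V_{n-1}$, the plan is to prove the statement by downward induction on $i$, starting from $i=n-1$ and reducing each later case to it.

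For the base case $i=n-1$, write $x=(y,z)$ with $y\in X_n$ and $z\in\Omega_{n-1}$, so that $x\eta_{n-1}=z$. Write $v=(\gamma_n,v_{n-1})$ with $\gamma_n=(t_j)_{j\in\Omega_{n-1}}\in T_n^{\Omega_{n-1}}$ and $v_{n-1}=v\theta_{n-1}\in V_{n-1}$. The action \eqref{eq:semigroupaction} gives directly
\[
xv=(y,z)(\gamma_n,v_{n-1})=(yt_z,\, z v_{n-1}).
\]
Projecting to the second coordinate, $(xv)\eta_{n-1}=zv_{n-1}=(x\eta_{n-1})(v\theta_{n-1})$, which is the base case.

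For the inductive step, take $i<n-1$. Observe that $\eta_i=\eta_{n-1}\circ\eta'_i$, where $\eta'_i:\Omega_{n-1}\to\Omega_i$ is the natural projection. Likewise $\theta_i=\theta_{n-1}\circ\theta'_i$, where $\theta'_i:V_{n-1}\to V_i$ is the natural projection $(\gamma_{n-1},\dots,\gamma_1)\mapsto(\gamma_i,\dots,\gamma_1)$. The pair $(\theta'_i,\eta'_i)$ is exactly the analogous pair for the shorter iterated wreath product $V_{n-1}$ on $\Omega_{n-1}$. Formally, I would therefore run the induction on the number $n$ of factors, with the statement holding for every $1\le i\le n-1$; the case $i=n-1$ is the base computation above, valid for any $n$. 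For $i<n-1$, the inductive hypothesis applied to $V_{n-1}$ gives $(zw)\eta'_i=(z\eta'_i)(w\theta'_i)$ for all $z\in\Omega_{n-1}$ and $w\in V_{n-1}$. Chaining this with the base case yields
\[
(xv)\eta_i=\bigl((x\eta_{n-1})(v\theta_{n-1})\bigr)\eta'_i=(x\eta_{n-1}\eta'_i)(v\theta_{n-1}\theta'_i)=(x\eta_i)(v\theta_i).
\]

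The main obstacle is bookkeeping rather than mathematics. One must check that the recursive coordinate notation $(\gamma_n,\dots,\gamma_1)$ really does make the compositions $\theta_{n-1}\theta'_i=\theta_i$ and $\eta_{n-1}\eta'_i=\eta_i$ hold, that is, that the projections are compatible with the nesting $V_i=T_i\wr V_{i-1}$. This follows immediately from how the notation was defined recursively. Along the way, the base computation should be written out carefully, since it is the only place the wreath action \eqref{eq:semigroupaction} is actually used.
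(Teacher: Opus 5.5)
Your proof is correct and is essentially the paper's argument. The paper writes $\theta_i$ and $\eta_i$ as composites of the one-step projections $\pi_k\colon V_{k+1}\to V_k$ and $\varphi_k\colon\Omega_{k+1}\to\Omega_k$, reads off $(xv)\varphi_k=(x\varphi_k)(v\pi_k)$ directly from \eqref{eq:semigroupaction} exactly as in your base case, and chains these identities; your induction on the number of factors is a more formal packaging of that same iteration.
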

\begin{prf}
For $1\leq i\leq n-1$, let $\pi_i:V_{i+1}\to V_i$ be the natural projection
 semigroup homomorphism, and let $\varphi_i:\Omega_{i+1}\to\Omega_{i}$ 
 be the natural projection. 
 Observe that $\theta_i=\pi_{n-1}\pi_{n-2}\dots\pi_{i}$ and 
 $\eta_i=\varphi_{n-1}\varphi_{n-2}\dots\varphi_{i}$. 
 For $x\in\Omega_{i+1}$ and $v\in V_{i+1}$, 
 by Equation~\eqref{eq:semigroupaction}, $(x v)\varphi_i=(x\varphi_i)(v\pi_i)$.  
 By using this equation repeatedly, we have
 \begin{align*}
 (xv)\eta_i&=(xv)\varphi_{n-1}\varphi_{n-2}\dots\varphi_{i}\\
 &=((x\varphi_{n-1})(v\pi_{n-1}))\varphi_{n-2}\varphi_{n-3}\dots\varphi_{i}\\
 &=((x\varphi_{n-1}\varphi_{n-1})(v\pi_{n-1}\pi_{n-2}))\varphi_{n-3}\varphi_{n-4}\dots\varphi_{i}\\
 &=\dots\\
 &=(x\eta_i)(v\theta_i)
 \end{align*}
 as required.
\end{prf}

Now set $\varepsilon_i=(e,\dots, e)$ to be the identity in $T_i^{\Omega_{i-1}}$ for $1\leq i\leq n$. 
For $1\leq i\leq n$, if $\rank(T_i:G_i)=1$, 
then there exists $a_i\in T_i$ such that 
$T_i=\langle G_i, a_i\rangle$. Such $a_i$ is not a permutation in the group of units.
Now set
\[
\alpha_i=(a_i,e,e,\dots, e)\in T_i^{\Omega_{i-1}},
\]
where $a_i$ occurs in the coordinate with index $(1,1,\dots, 1)\in\Omega_{i-1}$, 
and for any $\omega\in\Omega_{i-1}$, denote by $\alpha_{i,\omega}$ the $\omega$\nbd th entry of $\alpha_i$. Let $\bar \alpha_i$ be the element in $V$ with the form
\[
\bar \alpha_i=(\varepsilon_n, \varepsilon_{n-1},\dots, \varepsilon_{i+1}, \alpha_i,\varepsilon_{i-1}, \dots, \varepsilon_1).
\]
Let $x=(x_n,x_{n-1},\dots, x_1)\in\Omega$ and set $\omega=x\eta_{i-1}=(x_{i-1},x_{i-2},\dots, x_1)$. By using Equation~\eqref{eq:semigroupaction} repeatedly, we deduce
\begin{align}
x\bar\alpha_i&=(x_n,x\eta_{n-1})(\varepsilon_n, \bar\alpha_i\theta_{n-1})\notag\\
 &=(x_n, (x\eta_{n-1})(\bar\alpha_i\theta_{n-1}))\notag\\
 &=(x_n, (x_{n-1}, x\eta_{n-2})(\varepsilon_{n-1}, \bar\alpha_i\theta_{n-2}))\notag\\
 &=\dots\notag\\
 &=(x_n,\dots, x_{i+1}, (x_i, \omega)(\alpha_i, \bar\alpha_i\theta_{i-1}))\notag\\
 &=(x_n,\dots, x_{i+1}, x_i\alpha_{i,\omega}, (x\eta_{i-1})(\bar\alpha_i\theta_{i-1}))\notag\\
 &=\dots\notag\\
 &=(x_n,\dots, x_{i+1}, x_i\alpha_{i,\omega}, x_{i-1},\dots, x_1)\label{eq:chap61}.
\end{align}

Recall that for an element $r$ in a transformation semigroup $(R, X)$, 
\[
\ker(r)=\set{(x,y)\in X\times X}{xr=yr}.
\]
Suppose that two elements $x, y \in \Omega$ satisfy $(x, y)\in\ker(\bar\alpha_i)$. Since $\alpha_{i,\omega}$ is not necessarily a unit, it is possible that $x\neq y$. The following lemma determines the condition for $x=y$.

\begin{lem}\label{lem:kercap}
Let $g\in W$ be a unit in $V$. 
Let $1\leq i, j\leq n$ with $i\neq j$ such that $\rank(T_i:G_i)=\rank(T_j:G_j)=1$. 
If $x,y\in \Omega$ such that $(x,y)\in \ker(\bar\alpha_i)$ and $(xg, yg)\in\ker(\bar\alpha_j)$, then $x=y$. 
\end{lem}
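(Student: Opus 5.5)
The plan is to read off from Equation~\eqref{eq:chap61} exactly how two points in $\ker(\bar\alpha_i)$ can differ, and then to use Lemma~\ref{lem:semihomo} to show that the unit $g$ preserves a numerical invariant of pairs of points that records the coordinate at which they differ. For two points $x\neq y$ in $\Omega$, let $d(x,y)$ be the least $k\in\{1,\dots,n\}$ with $x\eta_k\neq y\eta_k$, where $\eta_n$ is the identity on $\Omega$. Since $\eta_k$ projects onto the last $k$ coordinates $(x_k,\dots,x_1)$, $d(x,y)$ is the smallest index of a coordinate in which $x$ and $y$ differ.

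\emph{Step 1 (shape of $\ker(\bar\alpha_i)$).} Suppose $(x,y)\in\ker(\bar\alpha_i)$ with $x\neq y$. By Equation~\eqref{eq:chap61},
\[
x\bar\alpha_i=(x_n,\dots,x_{i+1},x_i\alpha_{i,\omega},x_{i-1},\dots,x_1),
\]
and similarly for $y$ with $\omega'=y\eta_{i-1}$. Comparing coordinates, $x_k=y_k$ for every $k\neq i$. Hence $x$ and $y$ differ only in coordinate $i$, and therefore $d(x,y)=i$. (When $i=1$ the prefix $\omega$ is empty and $\alpha_1=a_1$; the argument is unchanged.)

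\emph{Step 2 (invariance of $d$ under units).} Let $g\in W$ and $1\leq k\leq n-1$. Then $g\theta_k\in W_k$, since $\theta_k$ is a homomorphism and the components of $g$ are units. Hence $g\theta_k$ is a unit of $V_k$ by Corollary~\ref{cor:unititwr}, so it acts as a permutation of $\Omega_k$. By Lemma~\ref{lem:semihomo}, $(xg)\eta_k=(x\eta_k)(g\theta_k)$ and likewise for $y$. Since $g\theta_k$ is a bijection, $x\eta_k=y\eta_k$ holds if and only if $(xg)\eta_k=(yg)\eta_k$. For $k=n$ the same equivalence holds because $g$ itself is a permutation of $\Omega$. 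Consequently, if $x\neq y$ then $xg\neq yg$ and $d(xg,yg)=d(x,y)$.

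\emph{Step 3 (conclusion).} Suppose, for a contradiction, that $x\neq y$. Step 1 gives $d(x,y)=i$. Step 2 gives $xg\neq yg$ and $d(xg,yg)=i$. Applying Step 1 to the pair $(xg,yg)\in\ker(\bar\alpha_j)$ gives $d(xg,yg)=j$. Thus $i=j$, contradicting $i\neq j$, and so $x=y$.

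The only point requiring care is Step 2, specifically justifying that $g\theta_k$ acts bijectively on $\Omega_k$. This comes from $\theta_k$ mapping $W$ into $W_k$ together with Corollary~\ref{cor:unititwr} applied to the shorter iterated wreath product $V_k$. The rest is bookkeeping with Equation~\eqref{eq:chap61}.
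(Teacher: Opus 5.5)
Your proof is correct and takes essentially the same approach as the paper: Equation~\eqref{eq:chap61} shows that kernel pairs of $\bar\alpha_i$ differ only in coordinate $i$, and Lemma~\ref{lem:semihomo}, together with the bijectivity of $g\theta_k$, transports equality of the $\eta_k$-projections through $g$. The differences are cosmetic: your invariant $d(x,y)$ replaces the paper's reduction to $i<j$ (after which the paper only needs $(xg)\eta_{j-1}=(yg)\eta_{j-1}\Rightarrow x\eta_{j-1}=y\eta_{j-1}$), and you justify explicitly why $g\theta_k$ permutes $\Omega_k$, which the paper leaves implicit.
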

\begin{prf}
Without loss of generality, we assume that $1\leq i<j\leq n$, 
since if $j<i$, then we can set $u=xg$ and $v=yg$, and so $x=ug^{-1}, y=vg^{-1}$. Let $x=(x_n, x_{n-1},\dots, x_1)$ and $y=(y_n, y_{n-1},\dots, y_1)$. 
As $(x,y)\in\ker(\bar\alpha_i)$, Equation~\eqref{eq:chap61} gives $x_k=y_k$ except when $k=i$. 
Using Equation~\eqref{eq:chap61}, the fact that $(xg, yg)\in \ker(\bar\alpha_j)$ yields $(xg)\eta_{j-1}=(yg)\eta_{j-1}$.
Because $g$ is a unit, apply Lemma~\ref{lem:semihomo} to observe that $x\eta_{j-1}=y\eta_{j-1}$. Hence $x_i=y_i$, and the lemma holds.
\end{prf}

Now we can determine a lower bound for $\rank(V:W)$. Ara\'ujo and Schneider determine the relative rank in \cite[Lemma 3.2]{wrSemi} when $n=2$. We adopt their methods and extend their results.
\begin{lem}\label{lem:semilow}
Let $V$ and $W$ be defined as before in this section. Then
\[
\rank(V:W)\geq \sum_{i=1}^n \rank(T_i:G_i).
\]
\end{lem}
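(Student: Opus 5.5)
Let $I=\set{i}{\rank(T_i:G_i)=1}$ and $k=\order{I}$; since each $\rank(T_i:G_i)\leq 1$, the sum on the right equals $k$. We must show that no set $X\subseteq V$ with $\order{X}<k$ satisfies $\langle W\cup X\rangle=V$. The idea, following Ara\'ujo and Schneider, is to attach to each non-unit of $V$ an ``index'' in $I$ describing at which level it first fails to be injective. We then show that an element whose kernel is as small as possible at level $i$ can only be obtained from generators of index $i$. Since $W$ consists of units, this forces at least one generator in $X$ for each $i\in I$.

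First, I make precise what ``minimal non-injectivity at level $i$'' means. For $i\in I$, by Equation~\eqref{eq:chap61} the element $\bar\alpha_i$ only changes the $i$-th coordinate, via $\alpha_{i,\omega}$. Since $a_i$ is a non-unit of the finite monoid $T_i$, $\bar\alpha_i$ is non-injective, and every nontrivial pair $(x,y)$ in its kernel differs only in coordinate $i$. Call a non-injective $v\in V$ \emph{of type $i$} if every pair $(x,y)\in\ker(v)$ with $x\neq y$ satisfies $x_k=y_k$ for all $k\neq i$. Multiplying by units preserves this property up to conjugation: $\ker(gvh)$ is the image of $\ker(v)$ under $g^{-1}$. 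So I instead track the coordinates in which kernel pairs may differ, using Lemma~\ref{lem:semihomo} to see that units respect the projections $\eta_j$.

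The key step is a factorization argument. Suppose $\bar\alpha_i=w_1w_2\cdots w_m$ with each $w_l\in W\cup X$, and let $l$ be the least index with $w_l\notin W$. Put $g=w_1\cdots w_{l-1}\in W$. Then $\ker(g w_l)\subseteq\ker(\bar\alpha_i)$ up to the bijection $g$, and $w_l$ is non-injective, since otherwise it would be a permutation of the finite set $\Omega$ and hence a unit. I claim $w_l$ is then forced to be of a type compatible with $i$. More precisely, if $w_l$ has a nontrivial kernel pair $(u,v)$, then $(ug^{-1},vg^{-1})$ is a nontrivial kernel pair of $\bar\alpha_i$. Now suppose $w_l\in X$ is also the first non-unit factor in a factorization of $\bar\alpha_j$, for some $j\in I$ with $j\neq i$, with prefix unit $h$. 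Then $(uh^{-1},vh^{-1})\in\ker(\bar\alpha_j)$ as well. Taking $x=ug^{-1}$, $y=vg^{-1}$ and the unit $g h^{-1}$, Lemma~\ref{lem:kercap} forces $x=y$, a contradiction. Hence the map $i\mapsto$ (first non-unit factor in some fixed factorization of $\bar\alpha_i$) sends $I$ injectively into $X$, giving $\order{X}\geq k$.

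The main obstacle is making the first-factor step rigorous. We need that a nontrivial kernel pair of $w_l$ really lifts to a nontrivial kernel pair of the whole product $\bar\alpha_i$, that is, $\ker(gw_l)\subseteq\ker(gw_l\cdots w_m)$; this holds because kernels can only grow under right multiplication. We also need to check that the map $i\mapsto w_l$ uses the same hypotheses as Lemma~\ref{lem:kercap}, namely a single unit relating the two kernel pairs. I would verify this carefully, in particular the direction of the actions (right actions, with $x(gw)=(xg)w$), before concluding.
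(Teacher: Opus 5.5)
Your proof is correct and is essentially the paper's argument. The paper also takes, for each $\bar\alpha_i$, the first non-unit factor $\lambda$ in a factorization $g_i\lambda v_i$. It then applies Lemma~\ref{lem:kercap} with the unit $g_ig_j^{-1}$ to show that two distinct indices cannot share the same $\lambda$, phrased as a pigeonhole contradiction rather than as your injectivity of $i\mapsto w_l$. Your ``type $i$'' paragraph is never used and can be dropped.
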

\begin{prf}
Set $m=\sum_{i=1}^n \rank(T_i:G_i)$. Suppose that $m=1$. Then $W$ is a proper subsemigroup of $V$, and it follows immediately that $\rank(V:W)\geq1$. 

Suppose that $m\geq2$. We will prove the lemma by contradiction and we assume that $\rank(V:W)< m$. 
Then there exist $\lambda_1,\lambda_2,\dots,\lambda_{m-1}\in V\backslash W$ 
such that $V=\langle W, \lambda_1,\dots,\lambda_{m-1}\rangle$. 
Then let $L=\{\lambda_1,\dots,\lambda_{m-1}\}$. 
For each $1\leq i\leq n$ such that $\rank(T_i:G_i)=1$, $\bar\alpha_i$ is a product of the generating set $W\cup L$. 
Note that each $\bar\alpha_i$ is not a unit. 
Then the product for $\bar\alpha_i$ must involve at least one factor from $L$. Thus, for each $\bar\alpha_i$, there exist $g_i\in W$, $v_i\in V$ and an element $\mu_i\in L$, such that $g_i\mu_iv_i=\bar\alpha_i$.
There then exist $i\neq j$ and $\lambda\in L$ such that
$ g_i\lambda v_i=\bar\alpha_i$ and $g_j\lambda v_j=\bar\alpha_j$. It follows that $\lambda v_i=g_{i}^{-1}\bar\alpha_i$ and $\lambda v_j=g_{j}^{-1}\bar\alpha_j$, and so
\[
\ker(\lambda)\subseteq\ker(g_{i}^{-1}\bar\alpha_i)\cap\ker(g_{j}^{-1}\bar\alpha_j).
\]
Since $\lambda$ is not a unit, there exist different elements $a, b\in \Omega$ such that $(a, b)\in\ker(\lambda)$. 
Let $x=ag_i^{-1}$ and $y=bg_i^{-1}$. 
We have $(x, y)\in\ker(\bar\alpha_i)$, $(xg_ig_j^{-1}, yg_ig_j^{-1})\in\ker(\bar\alpha_j)$ and $x\neq y$. However, Lemma~\ref{lem:kercap} tells us that this is impossible. We deduce that $\rank(V:W)\geq \sum_{i=1}^n \rank(T_i:G_i)$.
\end{prf}

The following lemma follows from Lemma~\ref{lem:semiup} and~\ref{lem:semilow}.

\begin{lem}\label{lem:combine}
For the iterated wreath products of semigroups $V$ and $W$ as defined above,
\[
\rank(V:W)=\sum_{i=1}^n \rank(T_i:G_i).
\] 
\end{lem}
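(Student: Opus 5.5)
This equality follows directly by combining the two bounds already established. Both Lemma~\ref{lem:semiup} and Lemma~\ref{lem:semilow} are stated for exactly the $V$ and $W$ of this section: each $T_i$ is a transformation monoid with transitive group of units $G_i$ and $\rank(T_i:G_i)\leq 1$. So no further hypotheses need to be checked before invoking them.

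The plan is as follows.

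First, I would apply Lemma~\ref{lem:semiup}, which gives
\[
\rank(V:W)\leq \sum_{i=1}^n \rank(T_i:G_i).
\]
Recall how that bound was obtained. It comes from induction on $n$ using Lemma~\ref{lem:wrrelarank}, applied to the decomposition $V=T_n\wr V_{n-1}$ and $W=G_n\wr W_{n-1}$. One ingredient is Corollary~\ref{cor:unititwr}, which says that $W_{n-1}$ is the group of units of $V_{n-1}$. The other is the transitivity of $W_{n-1}$ on $\Omega_{n-1}$, which follows inductively from the transitivity of each $G_i$ on $X_i$, since a wreath product of transitive groups is transitive in the product action. Since the earlier lemma may be assumed, I would simply cite it.

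Second, I would apply Lemma~\ref{lem:semilow}, which gives the reverse inequality
\[
\rank(V:W)\geq \sum_{i=1}^n \rank(T_i:G_i).
\]
Its argument pigeonholes two of the non-unit elements $\bar\alpha_i$, $\bar\alpha_j$ onto a common generator $\lambda$. It then uses Lemma~\ref{lem:kercap} to show that $\ker(\lambda)$ must be trivial, which contradicts $\lambda$ being a non-unit.

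Combining the two inequalities gives the equality.

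There is no real obstacle at this stage, since all the substance lives in the two cited lemmas. The only point I would double-check concerns the degenerate case $m=\sum_i\rank(T_i:G_i)=0$, where every $T_i=G_i$. Here the lower bound is trivial because relative ranks are nonnegative. The upper bound gives $\rank(V:W)\leq 0$, and this is consistent because then $V=W$. I would mention this case explicitly, since the proof of Lemma~\ref{lem:semilow} only treats $m\geq 1$.
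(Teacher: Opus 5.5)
Your proposal is correct and is exactly the paper's argument: the equality is obtained by combining the upper bound of Lemma~\ref{lem:semiup} with the lower bound of Lemma~\ref{lem:semilow}. Your explicit treatment of the case $\sum_{i=1}^n\rank(T_i:G_i)=0$ (where $V=W$) is a sensible addition that the paper leaves implicit. One small terminological point: $W_{n-1}$ acts on $\Omega_{n-1}=X_{n-1}\times\Omega_{n-2}$ by the imprimitive action, not the product action, though it is transitive as you say.
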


Corollary~\ref{cor:unititwr} tells us that $W$ is the group of units in $V$, and Corollary B in \cite{LuQuick} determines that $d(W)=n$.
Combined with Lemma~\ref{lem:ranksemi}, the main theorem follows immediately.

{\small

\end{document}